\theoremstyle{plain}
\newtheorem{theorem}{Theorem}[section]
\newtheorem{corollary}{Corollary}[section]
\newtheorem{remark}{Remark}[section]
\newtheorem{lemma}{Lemma}[section]
\begin{document}

\title[area of minimal hypersurface ]
{ Area of minimal hypersurfaces }
\author{Qing-Ming Cheng, Guoxin Wei and Yuting Zeng}
\address{Qing-Ming Cheng \\ Department of Applied Mathematics, Faculty of Sciences ,
Fukuoka  University, 814-0180, Fukuoka,  Japan, cheng@fukuoka-u.ac.jp}
\address{Guoxin Wei \\  School of Mathematical Sciences, South China Normal University,
510631, Guangzhou,  China, weiguoxin@tsinghua.org.cn}
\address{Yuting Zeng \\  School of Mathematical Sciences, South China Normal University,
510631, Guangzhou,  China, 1054237466@qq.com}

\begin{abstract}
A well-known conjecture of Yau states that the area of one of Clifford minimal hypersurfaces $S^k\big{(}\sqrt{\frac{k}{n}}\, \big{)}\times S^{n-k}\big{(}\sqrt{\frac{n-k}{n}}\, \big{)}$ gives the lowest value of area among all non-totally geodesic compact minimal hypersurfaces in the unit sphere $S^{n+1}(1)$. The present paper shows that Yau conjecture is true for minimal rotational hypersurfaces, more precisely, the area $|M^n|$ of compact minimal rotational hypersurface $M^n$ is either equal to $|S^n(1)|$, or equal to $|S^1(\sqrt{\frac{1}{n}})\times S^{n-1}(\sqrt{\frac{n-1}{n}})|$, or greater than $2(1-\frac{1}{\pi})|S^1(\sqrt{\frac{1}{n}})\times S^{n-1}(\sqrt{\frac{n-1}{n}})|$. As the application, the entropies of some special self-shrinkers are estimated.
\end{abstract}


\footnotetext{ 2010 \textit{ Mathematics Subject Classification}: 53C42, 53A10.}

\footnotetext{{\it Key words and phrases}: area, minimal hypersurface, Yau conjecture.}

\footnotetext{The first author was partially  supported by JSPS Grant-in-Aid for Scientific Research (B):  No.16H03937
and Challenging Exploratory Research.
The second author was partly supported by grant No. 11371150 of NSFC.}

\maketitle
\section{Introduction}

The study of minimal hypersurfaces in space forms (that is, $\mathbb{R}^{n+1}$, the sphere $S^{n+1}$, and hyperbolic space $H^{n+1}$), is one of the most important subjects in differential geometry. There are a lot of nice results on this topic (see \cite{B2}, \cite{CDK}, \cite{DX}, \cite{L}, \cite{PT} and many others). The simplest examples of minimal hypersurfaces in $S^{n+1}$ are the totally geodesic $n$-spheres. Another basic examples are the so-called Clifford minimal hypersurfaces $S^k\big{(}\sqrt{\frac{k}{n}}\, \big{)}\times S^{n-k}\big{(}\sqrt{\frac{n-k}{n}}\, \big{)}$.

Cheng, Li and Yau \cite{CLY} proved in 1984 that if $M^n$ is a compact minimal hypersurface in the unit sphere $S^{n+1}(1)$ and $M^n$ is not totally geodesic, then there exists a constant $c(n)>0$, such that the area $|M^n|$ of $M^n$ satisfies $|M^n|>(1+c(n))|S^n(1)|$, that is, the area of the totally geodesic $n$-sphere $S^{n}(1)\subset S^{n+1}(1)$ is the smallest among all compact minimal hypersurfaces in $S^{n+1}(1)$.

In 1992, S.T. Yau \cite{Y}  posed the following conjecture (P288, Problem 31):

\noindent {\bf Yau Conjecture}: {\it The area of one of Clifford minimal hypersurfaces $S^k\big{(}\sqrt{\frac{k}{n}}\, \big{)}\times S^{n-k}\big{(}\sqrt{\frac{n-k}{n}}\, \big{)}$ gives the lowest value of area among all non-totally geodesic compact minimal hypersurfaces in the unit sphere $S^{n+1}(1)$.}

In this paper, we consider a little more restricted problem of Yau conjecture for compact minimal rotational hypersurfaces $M^n$ in $S^{n+1}(1)$. As one of the main results of this paper, we prove

\begin{theorem}\label{theorem 1}
If $M^n$ is a compact minimal rotational hypersurface in $S^{n+1}(1)$, then the area $|M^n|$ of $M^n$ satisfies either
$|M^n|=|S^n(1)|$, or $|M^n|=|S^{1}(\sqrt{\frac{1}{n}})\times S^{n-1}(\sqrt{\frac{n-1}{n}})|$,  or
$|M^n|> 2(1-\dfrac{1}{\pi})|S^{1}(\sqrt{\frac{1}{n}})\times S^{n-1}(\sqrt{\frac{n-1}{n}})|$.
\end{theorem}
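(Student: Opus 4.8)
The plan is to use the rotational symmetry to collapse the entire problem onto a single profile curve, and then to convert the theorem into two sharp estimates for oscillatory integrals. Write $S^{n+1}(1)\subset\mathbb R^{n+2}=\mathbb R^n\times\mathbb R^2$ and let $SO(n)$ act on the first factor, with generic orbits $S^{n-1}$. A compact minimal rotational $M^n$ is swept out by these orbits along a profile curve $\gamma$ in the two-dimensional orbit space, which I identify with the closed hemisphere $\{\beta\in[0,\pi/2]\}$ of a totally geodesic $S^2$, in coordinates where the orbit through a point has radius $f=\cos\beta$ and $\alpha$ is the longitude. Since $S^{n-1}(\cos\beta)$ has area $\omega_{n-1}(\cos\beta)^{n-1}$ with $\omega_{n-1}:=|S^{n-1}(1)|$, the area splits as
\[
|M^n|=\omega_{n-1}\int_{\gamma}(\cos\beta)^{n-1}\,ds ,
\]
so $M^n$ is minimal precisely when $\gamma$ is a geodesic of the weighted length $\int(\cos\beta)^{n-1}ds$ on the hemisphere. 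As $\alpha$ is cyclic, Noether's theorem supplies the first integral $F(\beta)\sin\psi=c$, where $F(\beta)=(\cos\beta)^{n-1}\sin\beta$ and $\psi$ is the angle between $\gamma$ and the meridian.

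First I would read off the trichotomy from the constant $c$. The function $F$ rises from $0$ to its maximum $F_{\max}=\big(\tfrac{n-1}{n}\big)^{(n-1)/2}\tfrac{1}{\sqrt n}$ at $\beta_*$ (where $\sin^2\beta_*=\tfrac1n$, i.e. $f=\sqrt{(n-1)/n}$) and falls back to $0$. The value $c=0$ forces $\psi\equiv0$, so $\gamma$ is a meridian through the pole and $M^n=S^n(1)$; the value $c=F_{\max}$ forces $\beta\equiv\beta_*$, giving the Clifford hypersurface $S^1(\sqrt{1/n})\times S^{n-1}(\sqrt{(n-1)/n})$ of area $2\pi\omega_{n-1}F_{\max}$. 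For $0<c<F_{\max}$ the profile oscillates between the turning points $\beta_1<\beta_*<\beta_2$ with $F(\beta_i)=c$. Over one full $\beta$-oscillation the longitude advances by $\Delta\alpha=2\int_{\beta_1}^{\beta_2}\frac{c\,d\beta}{\sin\beta\sqrt{F^2-c^2}}$, the swept area is $P(c)=2\omega_{n-1}\int_{\beta_1}^{\beta_2}\frac{(\cos\beta)^{n-1}F}{\sqrt{F^2-c^2}}\,d\beta$, and compactness imposes the closure condition $k\,\Delta\alpha=2\pi m$ for integers $k,m\ge1$, whence $|M^n|=k\,P(c)$.

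The theorem now reduces to two uniform-in-$n$ estimates. \emph{Lemma A}: $\pi<\Delta\alpha<2\pi$ for every $c\in(0,F_{\max})$; granting it, $k\Delta\alpha=2\pi m$ rules out $k=1$ (as $\Delta\alpha<2\pi$) and $k=2$ (as $\Delta\alpha\ne\pi$), forcing $k\ge3$. \emph{Lemma B}: $P(c)\ge\omega_n:=|S^n(1)|$, with equality only as $c\to0$. Together they give $|M^n|=k\,P(c)>3\,|S^n(1)|$, and the theorem follows from the elementary inequality $3\,|S^n(1)|>2(1-\tfrac1\pi)\,|S^1(\sqrt{1/n})\times S^{n-1}(\sqrt{(n-1)/n})|$, which unwinds to $3\int_0^{\pi/2}(\cos\beta)^{n-1}d\beta>2(\pi-1)F_{\max}$ and is checked directly for all $n\ge2$. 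I note that the bound $k\ge3$ is genuinely necessary: already for $n=2$ one has $2\,|S^2|<2(1-\tfrac1\pi)|C|$, so $k\ge2$ alone would not close the argument.

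The hard part will be Lemmas A and B, both of which are \emph{sharp} limiting statements: $\Delta\alpha\to\pi$ and $P(c)\to|S^n(1)|$ as $c\to0$, so no lossy bound survives. For Lemma B the naive estimate $1/\sqrt{1-c^2/F^2}\ge1$ is insufficient, since it integrates $(\cos\beta)^{n-1}$ only over the proper subinterval $[\beta_1,\beta_2]\subset[0,\pi/2]$ and thus lands strictly below $|S^n|$; the blow-up of the integrand at the turning points must be captured quantitatively. I expect the cleanest route is to prove $P$ is monotone increasing in $c$ by differentiating under the integral after the substitution $w=\sin^2\beta$, which rewrites $P$ and $\Delta\alpha$ as integrals over $[w_-,w_+]$ against $1/\sqrt{w(1-w)^{n-1}-c^2}$, with integrable inverse-square-root singularities at the roots $w_\pm$ of $w(1-w)^{n-1}=c^2$. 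The same substitution should pin down Lemma A, whose limiting values $\pi$ (from the $1/\sin\beta$ singularity as $\beta_1\to0$) and $\pi\sqrt2$ (from the quadratic maximum of $F$ at $\beta_*$, where $(\ln F)''(\beta_*)=-2n$) follow from the standard arcsine integral; establishing the strict inequality $\Delta\alpha>\pi$ throughout the whole range, rather than merely in the limit, is the most delicate monotonicity to control.
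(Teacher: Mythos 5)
Your reduction is structurally sound, and in fact it is the same skeleton as the paper's, just derived from scratch: your $\Delta\alpha$, $P(c)$, $k$, $m$ are exactly the paper's $K(a)$, $\sigma_{n-1}\int_{x_1}^{x_2}x^{n-\frac32}(x^{n-1}-x^n-a)^{-1/2}\,dx$ (the area of one fundamental portion), the fold number $s$, and the rotation number $p$, under the dictionary $a=c^2$, $x=\cos^2\beta$; your closure condition is the paper's $K(a)=2\pi p/s$, and your final elementary inequality $3|S^n(1)|>2(1-\frac1\pi)\bigl|S^1(\sqrt{1/n})\times S^{n-1}(\sqrt{(n-1)/n})\bigr|$ is correct and easy to verify for all $n\ge2$. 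The problem is that both analytic cores of your argument, Lemmas A and B, are left unproven, and they are precisely where all of the difficulty lives; as submitted this is a plan, not a proof.

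Concretely: (1) The strict lower bound $\Delta\alpha>\pi$ in Lemma A (needed to exclude $k=2$, $m=1$) is sharp, since $\Delta\alpha\to\pi$ as $c\to0$, and it is exactly Otsuki's theorem --- the paper's Lemma 2.1, which the paper does not reprove but cites to Otsuki's dedicated papers [O1], [O2]. You correctly flag it as ``the most delicate monotonicity to control'' but supply no argument. (2) Your route to Lemma B, $P(c)>|S^n(1)|$, is monotonicity of $P$ in $c$ by ``differentiating under the integral'' after substituting $w=\sin^2\beta$. This fails as stated: the limits $w_\pm(c)$ move with $c$, the integrand $\bigl(w(1-w)^{n-1}-c^2\bigr)^{-1/2}$ has inverse-square-root singularities at those limits, and its formal $c$-derivative $c\bigl(w(1-w)^{n-1}-c^2\bigr)^{-3/2}$ is not integrable, so neither dominated convergence nor the Leibniz rule applies; moreover $w(1-w)^{n-1}$ is not monotone on $[w_-,w_+]$, so no single endpoint-fixing substitution is available (your own $n=2$ check works only because the quadratic there is symmetric). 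The paper circumvents this entirely: instead of monotonicity it proves the weaker but sufficient sharp bound $\int_{y_1}^{y_2}f(y)^{-1/2}dy>(2-\frac2\pi)A_0\pi$ by constructing explicit piecewise-quadratic majorants $g_1\ge f$ that vanish at the same endpoints $y_1,y_2$ (so the singularities are matched pointwise) and whose integrals $\int g_1^{-1/2}$ are exact arcsine values. Some device of this kind is what your plan still needs; note that your $k\ge3$ counting only requires $P(c)>\frac23(1-\frac1\pi)\bigl|S^1(\sqrt{1/n})\times S^{n-1}(\sqrt{(n-1)/n})\bigr|$, which the paper's comparison estimate already delivers, so the full strength of Lemma B (plausibly true, but stronger than what the paper proves) is not actually necessary for your conclusion.
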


\begin{remark}\label{remark 1}
From the theorem \ref{theorem 1}, we can know that Yau conjecture is true for minimal rotational hypersurfaces.
\end{remark}

\begin{corollary}\label{theorem 2}
If $M^n$ is a compact minimal rotational hypersurface in $S^{n+1}(1)$, then the area $|M^n|$ of $M^n$ satisfies either
$|M^n|=|S^n(1)|$, or $|M^n|=|S^{1}(\sqrt{\frac{1}{n}})\times S^{n-1}(\sqrt{\frac{n-1}{n}})|$,  or
$|M^n|=|M^n(3,2)|$, or $|M^n|> 3(1-\dfrac{1}{\pi})|S^{1}(\sqrt{\frac{1}{n}})\times S^{n-1}(\sqrt{\frac{n-1}{n}})|$, where $M^n(3,2)$ is the compact minimal rotational hypersurface with $3$-fold rotational symmetry and rotation number $2$.
\end{corollary}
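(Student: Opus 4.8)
The plan is to refine the area estimate underlying Theorem~\ref{theorem 1}, this time keeping track of the rotation number of the profile curve. Recall the classification it rests on: apart from the totally geodesic $S^n(1)$ and the Clifford torus $S^1(\sqrt{1/n})\times S^{n-1}(\sqrt{(n-1)/n})$, a compact minimal rotational hypersurface is of Otsuki type, its profile curve being a non-constant periodic solution of the associated minimal ODE; the condition that the hypersurface close up forces the azimuthal angle swept over a whole number $q$ of periods of the profile curve to equal $2\pi p$, which singles out a discrete family indexed by a coprime pair $(p,q)$, with $p$ the order of the rotational symmetry and $q$ the rotation number. First I would express the area in the form already used for Theorem~\ref{theorem 1}: since the profile curve closes up only after $q$ periods and these periods are congruent, the area is $q$ times the contribution of a single period, and that single-period contribution is a fixed dimensional constant times the integral of one positive integrand between consecutive turning points of the ODE solution.

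The main step is to promote the estimate behind Theorem~\ref{theorem 1} from the universal constant $2(1-\tfrac1\pi)$ to a bound scaling with $q$. I would show that the contribution of a single period exceeds $(1-\tfrac1\pi)$ times the Clifford area --- this is exactly the elementary integral inequality that yields the factor $2(1-\tfrac1\pi)$ in Theorem~\ref{theorem 1} --- whence
\[
|M^n(p,q)| > q\left(1-\frac{1}{\pi}\right)\left|S^{1}\Bigl(\sqrt{\tfrac{1}{n}}\,\Bigr)\times S^{n-1}\Bigl(\sqrt{\tfrac{n-1}{n}}\,\Bigr)\right| .
\]
As every non-trivial example has $q\ge 2$, this recovers Theorem~\ref{theorem 1}, and it shows that any example with rotation number $q\ge 3$ already has area greater than $3(1-\tfrac1\pi)$ times the Clifford area.

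It then remains to treat the borderline value $q=2$. Here I would invoke the period function of the Otsuki ODE which, as in Otsuki's classical analysis, confines the ratio $p/q$ to a fixed interval depending only on $n$, and deduce that the only admissible coprime pair with $q=2$ is $(p,q)=(3,2)$, i.e. precisely $M^n(3,2)$. Consequently every non-trivial example other than $M^n(3,2)$ has rotation number at least $3$ and therefore, by the displayed inequality, area exceeding $3(1-\tfrac1\pi)\,|S^1\times S^{n-1}|$; together with the totally geodesic sphere and the Clifford torus this gives the stated four-way alternative. Note that one does not need to locate $|M^n(3,2)|$ relative to $3(1-\tfrac1\pi)$: that hypersurface is listed as its own case precisely because it is the unique example not covered by the improved bound.

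The hard part will be making the per-period estimate sharp enough to yield the clean linear factor $q$ in the displayed inequality, and then pinning down the case $q=2$. Because $(1-\tfrac1\pi)$ is the infimum of the single-period contribution rather than a value attained uniformly, the delicate point is a precise enough control of the integrand and of the turning points as functions of the ODE parameter to guarantee both that the area grows linearly in $q$ and that no admissible pair besides $(3,2)$ has rotation number $2$. This simultaneous control --- a quantitative lower bound per period together with the arithmetic constraint coming from the period function --- is where I expect the real work to concentrate.
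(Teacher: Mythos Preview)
Your proposal is correct and matches the paper's argument: the per-period lower bound is exactly the inequality $w(a)>(1-\tfrac1\pi)\,\bigl|S^{1}(\sqrt{1/n})\times S^{n-1}(\sqrt{(n-1)/n})\bigr|$ established in the proof of Theorem~\ref{theorem 1}, the linear factor in the rotation number is already built into the area formula $|M^n|=w(a)\,p$ of Lemma~\ref{lemma 2}, and the fact that $(3,2)$ is the only admissible pair with rotation number~$2$ is immediate from Lemma~\ref{lemma 1} (since $K(a)\in(\pi,\sqrt{2}\,\pi)$ forces the ratio into $(\tfrac12,\tfrac{1}{\sqrt{2}})$). The ``hard part'' you anticipate is therefore already done, and the corollary is the two-line consequence the paper records.
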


\begin{remark}\label{remark 2}
From the theorem \ref{theorem 2}, we have that the conjecture proposed by Perdomo and the first author in \cite{PW} is true except the case $M^n(3,2)$ since
$3(1-\dfrac{1}{\pi})>2$.
\end{remark}

By considering the upper bounds of some integral, we show another main result of the present paper concerning the areas of compact minimal rotational hypersurfaces $M^n$ in $S^{n+1}(1)$, stated as follows

\begin{theorem}\label{theorem 3}
The lowest value of area among all compact minimal rotational hypersurfaces with non-constant principal curvatures in the unit sphere $S^{n+1}(1)$ is the area of either $M^n(3,2)$ or $M^n(5,3)$, where $M^n(k,l)$ is a compact minimal rotational hypersurface in $S^{n+1}(1)$ with $k$-fold rotational symmetry and rotation number $l$, its periodic is $K=l\times\frac{2\pi}{k}=\frac{2l\pi}{k}$.
\end{theorem}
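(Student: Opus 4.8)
The plan is to turn the minimization into a discrete problem over the admissible pairs $(k,l)$ and then to localize the minimizer by an integral estimate. First I would invoke the classification of compact minimal rotational hypersurfaces in $S^{n+1}(1)$ with non-constant principal curvatures (as in \cite{PW} and the preceding sections): each is an $M^n(k,l)$, and its profile satisfies the minimality ODE, which admits a first integral (a conserved ``energy'' $H$). The closing-up condition is exactly that $k$ oscillations of the profile produce $l$ full revolutions, i.e. the angular period equals $K=2\pi l/k$. Because the admissible energies sweep an open interval whose two endpoints are the excluded constant-curvature limits, the realizable periods satisfy $K\in(\pi,\sqrt2\,\pi)$, equivalently the reduced fractions $l/k$ range over the open interval $(\tfrac12,\tfrac1{\sqrt2})$. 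Listing coprime pairs in this range, the two of smallest $k$ are $(3,2)$ and $(5,3)$: indeed $k=4$ admits no integer $l$ in the range, and $k=6$ only reproduces $(3,2)$, so the next genuine competitor is $(7,4)$.

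Next I would express the area. Multiplying the contribution of one fundamental oscillation by the number $k$ of folds and normalizing against $|S^1(\sqrt{1/n})\times S^{n-1}(\sqrt{(n-1)/n})|$, the area becomes a function $A(k,l)$ which, through the energy $H=H(l/k)$ that realizes the period $K=2\pi l/k$, depends on the pair only via $k$ and the ratio $l/k$. The heart of the argument is a two-sided control of the governing integrals --- the ``upper bounds of some integral'' of the abstract --- producing a lower bound for $A(k,l)$ that is strictly increasing in $k$ once $k$ is not too small. I would then carry out the finite reduction: show that for every admissible pair with $k\ge 7$ this lower bound already exceeds \emph{both} $A(3,2)$ and $A(5,3)$, so no high-fold example can be minimal. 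Combined with the emptiness of $k=4$ and the coincidence $k=6\equiv(3,2)$, the only surviving candidates are $(3,2)$ and $(5,3)$, and the minimum is attained at one of them.

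It is worth noting that Theorem \ref{theorem 1} and Corollary \ref{theorem 2} can be read as a qualitative version of the same localization: every non-constant-curvature example other than $M^n(3,2)$ has area exceeding $3(1-\tfrac1\pi)$ times the Clifford area, which already isolates $M^n(3,2)$ together with a short list of small-$k$ competitors; Theorem \ref{theorem 3} sharpens this to the clean two-element answer by the finer integral estimate rather than the coarse threshold.

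The step I expect to be the main obstacle is precisely the two-sided estimate of the period and area integrals. The energy $H$ is defined only implicitly, through the transcendental relation $k\,P(H)=2\pi l$ between the period function $P$ and the pair $(k,l)$, so $A(k,l)$ has no closed form; the delicate point is to bound it tightly enough that its growth in the number of folds $k$ dominates for all $k\ge 7$ while remaining sharp in the narrow regime $k=3,5$. I anticipate that deciding \emph{which} of $M^n(3,2)$ and $M^n(5,3)$ is the true minimizer lies beyond these estimates --- the two values are close and the comparison may depend on the dimension $n$ --- which is exactly why the statement records both rather than singling one out.
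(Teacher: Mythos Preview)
Your outline is correct and matches the paper's strategy: reduce to the discrete family $M^n(k,l)$ with $l/k\in(\tfrac12,\tfrac1{\sqrt2})$, use the lower area bound coming from Theorem~\ref{theorem 1} together with an upper bound for $|M^n(3,2)|$, and check that every competitor with large index already exceeds $|M^n(3,2)|$.

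Two points of execution where the paper is a bit tighter than your sketch. First, the paper indexes by the rotation number $l$ rather than the fold number $k$, because Lemma~2.2 gives the clean factorization $|M^n|=w(a)\cdot l$; the lower bound from the proof of Theorem~\ref{theorem 1} is really a uniform lower bound on $w(a)$, so the growth in $l$ is immediate and no separate ``strictly increasing in $k$'' argument is needed. Second, you do not need to compare against \emph{both} $A(3,2)$ and $A(5,3)$: the paper proves the explicit upper bound $|M^n(3,2)|<3\,|S^1(\sqrt{1/n})\times S^{n-1}(\sqrt{(n-1)/n})|$ (this is the ``upper bound of some integral'' advertised in the introduction, namely Theorem~4.1), and then shows that every example with $l\ge 4$ already exceeds $3$ Clifford areas. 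For $l\ge 5$ the crude bound $5(1-\tfrac1\pi)>3$ suffices; for $M^n(7,4)$ one uses the known period $K=8\pi/7<\sqrt2\,\pi$ to sharpen the Theorem~\ref{theorem 1} estimate to $4(1-\tfrac1\pi)\cdot\tfrac{\sqrt2}{8/7}>3$. Your anticipation that the tie between $(3,2)$ and $(5,3)$ cannot be broken by these methods is exactly right and is recorded as a conjecture after the proof.
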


%

\section{Preliminaries}

Without loss of generality, we assume that the minimal rotational hypersurface is neither $S^n(1)$ nor $S^{1}(\sqrt{\frac{1}{n}})\times S^{n-1}(\sqrt{\frac{n-1}{n}})$.
From \cite{O} and \cite{P1}, we have the following description for every minimal rotational hypersurface in $S^{n+1}(1)$ whose principal curvatures are not constant.

Let us describe complete minimal rotational hypersurface $M_a$. For any positive number $a<a_0=\frac{(n-1)^{n-1}}{n^n}$, let $r(t)$ be a solution of the following ordinary differential equation

\begin{eqnarray}\label{theode}
(r^\prime(t))^2=1-r(t)^2-a r(t)^{2-2n}.
\end{eqnarray}
Since $0<a<a_0$, we have that the function $q(v)=1-v^2-a v^{2-2n}$ has two positive roots $r_1$ and $r_2$ between $0$ and $1$. Therefore it is not difficult to check that the solution of the differential equation (\ref{theode}) is a periodic function with period $T=2 \int_{r_1}^{r_2} \frac{1}{\sqrt{q(v)}}\, dv$  that takes values between $r_1$ and $r_2$. Moreover, since the differential equation (\ref{theode}) does not depend on $t$ explicitly, then, for any $k$ we have that $r(t-k)$ is a solution, provided $r(t)$ is a solution. Therefore we can assume that $r(0)=r_1$ and $r(\frac{T}{2})=r_2$.

If we define

$$\theta(t)= \int_0^t \frac{\sqrt{a} \, r^{1-n}(\tau)}{1-r^2(\tau)}\, d\tau,$$
then the hypersurface $\phi:S^{n-1}\times [0, T]\to S^{n+1}$ given by

$$\phi(y,t)=\big{(}\, r(t)\, y, \, \sqrt{1-r^2(t)}\,  \cos(\theta(t)),\,  \sqrt{1-r^2(t)}\,  \sin(\theta(t))\, \big{)}$$
is called the {\it fundamental portion} of $M_a$. The curve
$$\alpha(t)=\big{(}\, \sqrt{1-r^2(t)}\,  \cos(\theta(t)),\,  \sqrt{1-r^2(t)}\,  \sin(\theta(t))\, \big{)}$$
 is called the {\it profile curve} of $M_a$.  It turns out that the whole hypersurface $M_a$ is the union of rotations of the fundamental portion. We also have that the hypersurface $M_a$ is compact if and only if the number (also see \cite{O})

\begin{eqnarray}\label{k}
K(a)=\theta(T)= 2 \int_0^{\frac{T}{2}} \frac{\sqrt{a} \, r^{1-n}(\tau)}{1-r^2(\tau)}\, d\tau =2 \pi \frac{p}{s},
\end{eqnarray}
for some pair of relatively prime integers $p$ and $s$. In this case, $M_a$ is made out of exactly $s$ copies of the fundamental portion, that is, $M^n$ is a hypersurface with {\it $s$-fold rotational symmetry} and {\it rotation number $p$}. When $\frac{K(a)}{2 \pi}$ is not a rational number, we have that the hypersurface is not compact.

The following lemma is due to  Otsuki \cite{O1}, \cite{O2}. A proof for the particular case $n=2$ can also be found in  \cite{AL} and in \cite{P2}.

\begin{lemma}\label{lemma 1}
The function $K(a)$ given in (\ref{k}) is strictly increasing and differentiable on $(0,a_0)$ and

$$ \lim_{a\to 0} K(a)=\pi ,\qquad   \lim_{a\to a_0} K(a)=\sqrt{2}\pi.  $$

\end{lemma}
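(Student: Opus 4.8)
The plan is to turn everything into a single integral in the variable $w=r^{2}$ and study it as a function of the parameter $a$. Along the half-period $[0,T/2]$ the function $r$ increases from $r_1$ to $r_2$ and satisfies $r'=\sqrt{q(r)}$, so $dt=dr/\sqrt{q(r)}$; factoring $q(r)=r^{2-2n}\big(r^{2n-2}(1-r^{2})-a\big)$, the factor $r^{1-n}$ cancels against $\sqrt{q(r)}$, and substituting $w=r^{2}$ I would obtain
\[
K(a)=\sqrt a\int_{w_1}^{w_2}\frac{dw}{(1-w)\sqrt w\,\sqrt{g(w)-a}},\qquad g(w)=w^{n-1}(1-w),
\]
where $w_1<w_2$ are the two roots of $g(w)=a$ in $(0,1)$. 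A direct computation shows that $g$ has a single interior maximum at $w_*=(n-1)/n$ with $g(w_*)=(n-1)^{n-1}/n^{n}=a_0$, which both explains the threshold $a_0$ and locates the point to which the two roots merge as $a\uparrow a_0$.

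For the two limits I would perform an asymptotic analysis of this integral. As $a\to 0$ the roots split, $w_1\sim a^{1/(n-1)}\to 0$ and $w_2\to 1$; splitting the interval into a neighbourhood of $0$, a compact middle part, and a neighbourhood of $1$, the first two pieces are $O(a^{1/(2n-2)})$ and $O(\sqrt a)$ and vanish, while near $w=1$ the substitution $1-w=av$ reduces the dominant piece to $\int_1^{\infty}\frac{dv}{v\sqrt{v-1}}=\pi$, giving $\lim_{a\to0}K(a)=\pi$. As $a\uparrow a_0$ both roots collapse to $w_*$; expanding $g(w)-a\approx (a_0-a)-\frac{1}{2}|g''(w_*)|(w-w_*)^{2}$ turns the integral into an $\arcsin$, so the singular factor contributes $\pi\sqrt{2/|g''(w_*)|}$ while the regular prefactor is evaluated at $w_*$. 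Using $g''(w_*)=-(n-1)w_*^{n-3}$ and $a_0=w_*^{n-1}/n$, the algebra collapses to exactly $\sqrt2\,\pi$, which is the second limit.

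The remaining, and genuinely delicate, point is that $K$ is differentiable and strictly increasing. To handle the moving singular endpoints I would change variables to the level $\tau=g(w)$ on each monotone branch of $g$; this yields $K(a)=\sqrt a\int_a^{a_0}F(\tau)(\tau-a)^{-1/2}\,d\tau$ with a fixed positive $F(\tau)$ that is independent of $a$ and blows up like $(a_0-\tau)^{-1/2}$ at the top, so setting $G(\tau)=F(\tau)\sqrt{a_0-\tau}$ and $\tau=a+(a_0-a)\sin^{2}\chi$ gives the regular form
\[
K(a)=2\sqrt a\int_0^{\pi/2}G\big(a+(a_0-a)\sin^{2}\chi\big)\,d\chi ,
\]
from which differentiability on $(0,a_0)$ is immediate by differentiation under the integral sign. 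Monotonicity is the main obstacle: the derivative splits into a manifestly positive term $\frac{1}{\sqrt a}\int_0^{\pi/2}G\,d\chi$ and a term $2\sqrt a\int_0^{\pi/2}G'\cos^{2}\chi\,d\chi$ in which $G'<0$ (since $G$ is large near $\tau=0$ and finite at $\tau=a_0$), so the sign of $K'$ is a real competition between the prefactor $\sqrt{a(a_0-a)}$ and the integral. I expect the crux of the whole lemma to be extracting this sign---most likely by a careful integration by parts trading $G'$ against $\int G$ plus controlled boundary terms, or by comparison estimates on $F$ exploiting the explicit form $h(w)=1/((1-w)\sqrt w)$---and this is where the argument of Otsuki must do its real work.
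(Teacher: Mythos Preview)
The paper does not prove this lemma at all: it is quoted from Otsuki \cite{O1,O2} (with \cite{AL,P2} cited for $n=2$), so there is no in-paper argument to compare against. That said, let me comment on what you have.

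Your reduction to
\[
K(a)=\sqrt a\int_{w_1}^{w_2}\frac{dw}{(1-w)\sqrt w\,\sqrt{g(w)-a}},\qquad g(w)=w^{n-1}(1-w),
\]
is correct, and both asymptotic computations are right: the substitution $1-w=av$ near the upper endpoint gives $\int_1^\infty\frac{dv}{v\sqrt{v-1}}=\pi$ as $a\to 0$, and the quadratic expansion at $w_*=(n-1)/n$ with $g''(w_*)=-(n-1)w_*^{\,n-3}$ does collapse to $\sqrt{2}\,\pi$ as $a\to a_0$. The regularisation via $\tau=g(w)$ and then $\tau=a+(a_0-a)\sin^2\chi$ is also a clean way to get differentiability.

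The genuine gap is exactly where you locate it: strict monotonicity. You have rewritten $K'(a)$ as a difference of two positive terms and then stopped; the suggested integration-by-parts or comparison on $h(w)=1/((1-w)\sqrt w)$ is not carried out, and it is not obvious that either route closes without substantial further work. This is honest---you explicitly flag that ``this is where the argument of Otsuki must do its real work''---but it means the proposal is a sketch of the easy parts together with an acknowledgment that the hard part remains. For context, Otsuki's own argument does not proceed through your integral form at all: he works with the associated second-order nonlinear ODE for the support function and extracts monotonicity of the period from differential inequalities there, which is a rather different line of attack from the one you outline.
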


In \cite{PW}, Perdomo and the first author proved the  following lemma,

\begin{lemma}\label{lemma 2}
If $M^n$ is a compact minimal rotational hypersurface in $S^{n+1}(1)$ with non-constant principal curvatures, then the area of $M^n$, denoted by $|M^n|$, is equal to $$w(a)p,$$
 where $p$ is the rotational number greater than $1$ (see \eqref{k}),
\begin{equation}
w(a)=2\pi \sigma_{n-1}\dfrac{\int_{x_1}^{x_2}\frac{x^{n-\frac{3}{2}} \,}{\sqrt{x^{n-1}-x^n-a}} dx}{K(a)}\,,
\end{equation}
$a\in (0,a_0)$, $x_1<x_2$ are the only two roots in the interval $(0,1)$ of the polynomial $z(x)=x^{n-1}-x^n-a$,
$\sigma_{n-1}$ denotes the area of $S^{n-1}(1)$.
Moreover, we have
\begin{equation}
\aligned
&\ \ \ \lim_{a\to a_0} w(a)\\
&=\lim_{a\to a_0}2\pi \sigma_{n-1}\dfrac{\int_{x_1}^{x_2}\frac{x^{n-\frac{3}{2}} \,}{\sqrt{x^{n-1}-x^n-a}} dx}{K(a)}\\
 &=2\pi \sigma_{n-1}\dfrac{\sqrt{2a_0}\pi}{\sqrt{2}\pi}=2\pi \sigma_{n-1}\sqrt{a_0}\\
&= \biggl|S^1\biggl(\sqrt{\frac{1}{n}} \biggl)\times  S^{n-1} \biggl(\sqrt{\frac{n-1}{n}} \biggl) \biggl|.
\endaligned
\end{equation}

\end{lemma}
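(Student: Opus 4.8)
The plan is to compute $|M^n|$ directly from the parametrization $\phi(y,t)$ and then, after the substitution $x=r(t)^2$, to recognize the resulting integral as the one in the statement.

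First I would determine the induced metric. Taking an orthonormal frame $\{e_i\}_{i=1}^{n-1}$ on $S^{n-1}$, the spherical directions give $\partial_{e_i}\phi=r(t)\,e_i$, hence contribute $r(t)^2\,g_{S^{n-1}}$, while $\langle\partial_t\phi,\partial_{e_i}\phi\rangle=r(t)r'(t)\langle y,e_i\rangle=0$. For the $t$-direction, writing $f=\sqrt{1-r^2}$ and $\theta'=\frac{\sqrt a\,r^{1-n}}{1-r^2}$, one computes
$$|\partial_t\phi|^2=(r')^2+(f')^2+f^2(\theta')^2=\frac{(r')^2+a\,r^{2-2n}}{1-r^2},$$
and the defining equation $(r')^2=1-r^2-a\,r^{2-2n}$ collapses the numerator to $1-r^2$, so $|\partial_t\phi|^2=1$. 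Thus the induced metric is $dt^2+r(t)^2\,g_{S^{n-1}}$ with volume element $r(t)^{n-1}\,dt\,dV_{S^{n-1}}$. Since the compact $M^n$ is swept out by letting $t$ run over $s$ periods---during which $\theta$ increases by $sK(a)=2\pi p$, so the profile curve closes after $p$ turns---the area is $|M^n|=s\,\sigma_{n-1}\int_0^T r(t)^{n-1}\,dt$.

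Next I would pass to the variable $x=r^2$. On $[0,T/2]$ the function $r$ increases from $r_1$ to $r_2$, so $dt=dr/\sqrt{q(r)}$ with $q(v)=1-v^2-a\,v^{2-2n}$, and by the symmetry of one period $\int_0^T r^{n-1}\,dt=2\int_{r_1}^{r_2}r^{n-1}\,dr/\sqrt{q(r)}$. Using the identity $x^{n-1}q(\sqrt x)=x^{n-1}-x^n-a=z(x)$ (so that $x_i=r_i^2$ are exactly the roots of $z$), this becomes $\int_{x_1}^{x_2}\frac{x^{n-3/2}}{\sqrt{z(x)}}\,dx$. The same substitution turns the definition of $K(a)$ into $K(a)=\sqrt a\int_{x_1}^{x_2}\frac{x^{-1/2}}{(1-x)\sqrt{z(x)}}\,dx$. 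Inserting $K(a)=2\pi p/s$ then yields $|M^n|=s\,\sigma_{n-1}\int_{x_1}^{x_2}\frac{x^{n-3/2}}{\sqrt{z(x)}}\,dx=w(a)\,p$, as claimed.

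Finally, for $\lim_{a\to a_0}w(a)$ I would study the coalescence of the roots. With $g(x)=x^{n-1}-x^n$, the threshold $a_0=g(x^*)$ is the maximum, attained at $x^*=\frac{n-1}{n}$, and $x_1,x_2\to x^*$ as $a\to a_0$. Near $x^*$ one has $z(x)=g(x)-a=(a_0-a)-\tfrac12|g''(x^*)|(x-x^*)^2+\cdots$ with $|g''(x^*)|=(n-1)(x^*)^{n-3}$, so the positive measure $dx/\sqrt{z(x)}$ on $[x_1,x_2]$ has total mass tending to $\pi\sqrt{2/|g''(x^*)|}$ and concentrates at $x^*$. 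Hence $\int_{x_1}^{x_2}\frac{x^{n-3/2}}{\sqrt{z(x)}}\,dx\to (x^*)^{n-3/2}\pi\sqrt{2/|g''(x^*)|}=\sqrt{2a_0}\,\pi$, and combining with $\lim_{a\to a_0}K(a)=\sqrt2\,\pi$ from Lemma~\ref{lemma 1} gives $\lim_{a\to a_0}w(a)=2\pi\sigma_{n-1}\frac{\sqrt{2a_0}\,\pi}{\sqrt2\,\pi}=2\pi\sigma_{n-1}\sqrt{a_0}$; a direct check that $2\pi\sigma_{n-1}\sqrt{a_0}=|S^1(\sqrt{1/n})\times S^{n-1}(\sqrt{(n-1)/n})|$ completes the argument. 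The metric and change-of-variables steps are routine; I expect the real work to be in this last step, namely justifying the passage to the limit when the domain of integration itself collapses to a point---most cleanly handled by rescaling $x=x^*+\delta u$ with $\delta=\sqrt{2(a_0-a)/|g''(x^*)|}$ and applying dominated convergence to the rescaled integrals.
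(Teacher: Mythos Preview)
Your argument is correct in all essentials: the computation of the induced metric, the area formula $|M^n|=s\,\sigma_{n-1}\int_0^T r^{n-1}\,dt$, the substitution $x=r^2$ leading to the integral $\int_{x_1}^{x_2}x^{n-3/2}/\sqrt{z(x)}\,dx$, and the Laplace-type analysis of the limit $a\to a_0$ all check out. One small addition you might make explicit is why $p>1$: since $K(a)\in(\pi,\sqrt2\,\pi)$ by Lemma~\ref{lemma 1}, the relation $K(a)=2\pi p/s$ forces $1/2<p/s<1/\sqrt2$, which rules out $p=1$.

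As for comparison with the paper: there is nothing to compare. The paper does not prove Lemma~\ref{lemma 2} at all; it simply quotes the result from \cite{PW} (Perdomo--Wei). Your write-up therefore supplies a self-contained proof where the present paper only gives a citation. The route you take---direct computation of the warped-product metric, the change of variable $x=r^2$, and a quadratic expansion of $z(x)$ near its double root---is exactly the natural one and is, in substance, the argument carried out in \cite{PW}. Your suggested rescaling $x=x^*+\delta u$ with $\delta=\sqrt{2(a_0-a)/|g''(x^*)|}$ is a clean way to make the limiting step rigorous; this is slightly more explicit than what appears in the original reference, which is a virtue rather than a deviation.
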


\section{Proofs of the theorem \ref{theorem 1} and the corollary \ref{theorem 2}}

\noindent{\it Proof of the theorem \ref{theorem 1}}.
From the lemma \ref{lemma 1} and lemma \ref{lemma 2}, it is sufficient to prove the theorem \ref{theorem 1} if we can get the following inequality

\begin{equation}\label{eq:3-21-1}
2\int_{x_1}^{x_2}\frac{x^{n-\frac{3}{2}}}{\sqrt{x^{n-1}-x^{n}-a}}dx>2(1-\frac{1}{\pi})\sqrt{2a_0}\pi,
\end{equation}
for a compact minimal rotational hypersurface $M^n$ in $S^{n+1}(1)$ with non-constant principal curvatures,
where $0<x_1<x_0=\frac{n-1}{n}<x_2<1$ are two roots of $z(x)=x^{n-1}-x^{n}-a$,
$0<a<a_0$.

%
%
%

 We next prove \eqref{eq:3-21-1}. Let
\begin{equation}
y=x^{n-\frac{1}{2}},
\end{equation}
then
\begin{equation}
2\int_{x_1}^{x_2}\frac{x^{n-\frac{3}{2}}}{\sqrt{x^{n-1}-x^{n}-a}}dx
=\frac{4}{2n-1}\int_{y_1}^{y_2}\frac{1}{\sqrt{y^{\frac{2n-2}{2n-1}}-y^{\frac{2n}{2n-1}}-a}}dy.
\end{equation}
\eqref{eq:3-21-1} is equivalent to
\begin{equation}
\int_{y_1}^{y_2}\frac{1}{\sqrt{y^{\frac{2n-2}{2n-1}}-y^{\frac{2n}{2n-1}}-a}}dy>2(1-\frac{1}{\pi})\frac{2n-1}{4}\sqrt{2a_0}\pi:=2(1-\frac{1}{\pi})A_0\pi,
\end{equation}
where $y_1$, $y_2$ are two roots of $f(y)=y^{\frac{2n-2}{2n-1}}-y^{\frac{2n}{2n-1}}-a$, $A_0=\frac{2n-1}{4}\sqrt{2a_0}$.

 We construct a function $g_1(y)$ as follows:
\begin{equation}
 g_1(y) = {\begin{cases}
     c{{(\sqrt {{y_1}}  - \sqrt {{y_c}} )}^2} - c{{(\sqrt y  - \sqrt {{y_c}} )}^2},\ \ y \in [{y_1},{y_c}],\\[3mm]
     b{{({y_2} - {y_c})}^2} - b{{(y - {y_c})}^2},\quad \quad \quad \ \ \ \ y \in ({y_c},{y_2}],
     \end{cases}}
  \end{equation}
where $c = \frac{{8\sqrt {n(n - 1)} }}{{{{(2n - 1)}^2}}}$, $b = \frac{{2(n - 1)}}{{{{(2n - 1)}^2}}}{\left( {\frac{{n - 1}}{n}} \right)^{ - n}}$ and
$y_c=(\frac{n-1}{n})^{\frac{1}{2}(2n-1)}$.

Let
\begin{equation}
h_1(y)= g_1(y) - f(y), \ \ \ {\mbox{for}} \ y\in [y_1,y_2],
\end{equation}
 we will prove
\begin{equation}
h_1(y)\geq 0
\end{equation}
and
\begin{equation}
\aligned
 &\ \ \ \int_{y_1}^{y_2}\frac{1}{\sqrt{y^{\frac{2n-2}{2n-1}}-y^{\frac{2n}{2n-1}}-a}}dy\\
 &=\int_{{y_1}}^{{y_2}} {\frac{1}{{\sqrt {f(y)} }}} dy \ge \int_{{y_1}}^{{y_2}} {\frac{1}{{\sqrt {g_1(y)} }}} dy\\
& > \left( {2-\frac{2}{\pi} + \frac{2}{\pi }\sqrt {\frac{{{y_1}}}{{{y_c}}}} } \right){A_0}\pi.
\endaligned
\end{equation}

\noindent We next consider two cases.\\

\noindent {\bf Case 1: $y\in [y_c,y_2]$.}

 By a direct calculation, we obtain
\begin{equation}
h_1^{\prime}(y) =  - 2b(y - {y_c}) - \frac{{2n - 2}}{{2n - 1}}{y^{\frac{{ - 1}}{{2n - 1}}}} + \frac{{2n}}{{2n - 1}}{y^{\frac{1}{{2n - 1}}}}, \end{equation}
\begin{equation}\label{eq:4-19-1}
h_1^{\prime\prime}(y) =  - 2b + \frac{{2n - 2}}{{2n - 1}}\frac{1}{{2n - 1}}{y^{ - \frac{{2n}}{{2n - 1}}}} + \frac{{2n}}{{2n - 1}}\frac{1}{{2n - 1}}{y^{ - \frac{{2n - 2}}{{2n - 1}}}},
\end{equation}
and
\begin{equation}
 h_1^{\prime\prime}(y_c)=0,
\end{equation}
it is easy from \eqref{eq:4-19-1} to see that $h_1^{\prime\prime}(y)$ is a monotonic decreasing function on an interval $[y_c,y_2]$, then
\begin{equation}
h_1^{\prime\prime}(y) \leq h_1^{\prime\prime}(y_c)=0, \ \ \ {\text{for}} \ y\in [y_c,y_2],
\end{equation}
that is, $h_1^{\prime}(y)$ is a monotonic decreasing function on an interval $[y_c,y_2]$. Since $h_1^{\prime}(y_c)=0$, we have
$h_1^{\prime}(y)\leq0$ for $y\in [y_c,y_2]$, that is, $h_1(y)$ is a monotonic decreasing function on an interval $[y_c,y_2]$. Since $h_1(y_2)=0$, we conclude that
\begin{equation}
h_1(y)\geq h_1(y_2)=0,\ \ \ {\mbox{for}} \ y\in [y_c,y_2],
\end{equation}
it follows that
\begin{equation}
\frac{1}{{\sqrt {f(y)} }} - \frac{1}{{\sqrt {g_1(y)} }} \ge 0,
\end{equation}
\begin{equation}
\int_{{y_c}}^{{y_2}} {\left( {\frac{1}{{\sqrt {f(y)} }} - \frac{1}{{\sqrt {g_1(y)} }}} \right)} dy \ge 0.
\end{equation}

On the other hand,
\begin{equation}
\int_{{y_c}}^{{y_2}} {\frac{1}{{\sqrt {g_1(y)} }}dy}  = {A_0}\pi,
\end{equation}
we have
\begin{equation}
\int_{{y_c}}^{{y_2}} {\frac{1}{{\sqrt {{y^{\frac{{2n - 2}}{{2n - 1}}}} - {y^{\frac{{2n}}{{2n - 1}}}} - a} }}dy}
=\int_{{y_c}}^{{y_2}} {{\frac{1}{{\sqrt {f(y)} }}}}dy\geq \int_{{y_c}}^{{y_2}} \frac{1}{{\sqrt {g_1(y)} }} dy={A_0}\pi.
\end{equation}

\noindent {\bf Case 2: $y\in [y_1,y_c]$.}

 By a direct calculation, we have
\begin{equation}
h_1^{\prime}(y) = g_1^{\prime}(y) - f^{\prime}(y) = c\left( {\sqrt {\frac{{{y_c}}}{y}}  - 1} \right) - \frac{{2n - 2}}{{2n - 1}}{y^{ - \frac{1}{{2n - 1}}}} + \frac{{2n}}{{2n - 1}}{y^{\frac{1}{{2n - 1}}}},
\end{equation}
\begin{equation}\label{eq:4-19-2}
h_1''(y) =  - \frac{c}{2}\sqrt {\frac{{{y_c}}}{{{y^3}}}}  + \frac{{2n - 2}}{{2n - 1}}\frac{1}{{2n - 1}}{y^{ - \frac{{2n}}{{2n - 1}}}} + \frac{{2n}}{{2n - 1}}\frac{1}{{2n - 1}}{y^{ - \frac{{2n - 2}}{{2n - 1}}}},
\end{equation}
and
\begin{equation}\label{eq:4-19-3}
h_1''(y_c)=0,
\end{equation}
then we obtain from \eqref{eq:4-19-2} and \eqref{eq:4-19-3} that
\begin{equation}
h_1''(y_c)\leq0,
\end{equation}
\begin{equation}
h_1''(y)\leq0,\ \ \ {\mbox{for}} \ y\in [y_1,y_c].
\end{equation}

From the above equations and  $h_1'(y_c)=0$, $h_1(y_1)=0$, we can get
\begin{equation}
h_1(y)\geq0,\ \ \ {\mbox{for}} \ y\in [y_1,y_c].
\end{equation}
Hence we have

\begin{equation}
\aligned
 &\ \ \ \int_{{y_1}}^{{y_c}} {\frac{1}{{\sqrt {f(y)} }}} dy\ge \int_{{y_1}}^{{y_c}} {\frac{1}{{\sqrt {g_1(y)} }}} dy=  \left( {1 - \frac{2}{\pi } + \frac{2}{\pi }\sqrt {\frac{{{y_1}}}{{{y_c}}}} } \right){A_0}\pi.
\endaligned
\end{equation}

 From the above two cases, we conclude that
\begin{equation}
\int_{{y_1}}^{{y_2}} {\frac{1}{{\sqrt {f(y)} }}} dy  \geq \left( {2-\dfrac{2}{\pi} + \frac{2}{\pi }\sqrt {\frac{{{y_1}}}{{{y_c}}}} } \right){A_0}\pi>
\left( {2-\dfrac{2}{\pi} } \right)  {A_0}\pi.
 \end{equation}
This completes the proof of the theorem \ref{theorem 1}.
$$\eqno{\Box}$$

\noindent{\it Proof of the corollary \ref{theorem 2}}.
From the lemma \ref{lemma 1} and lemma \ref{lemma 2},  we have the area $|M^n|$ of a compact minimal rotational hypersurface $M^n$  in $S^{n+1}(1)$ with non-constant principal curvatures except $M^n(3,2)$ is greater than
\begin{equation}
3\times 2\pi \sigma_{n-1}\dfrac{\inf\limits_{a\in(0,a_0)}\int_{x_1}^{x_2}\frac{x^{n-\frac{3}{2}} \,}{\sqrt{x^{n-1}-x^n-a}} dx}{K(a)}\,
\end{equation}
 since the rotation number of $M^n(3,2)$ is $2$ and the rotation numbers of other hypersurfaces are greater than $2$.

From the proof of the theorem \ref{theorem 1}, we can get that
\begin{equation}
3\times 2\pi \sigma_{n-1}\dfrac{\int_{x_1}^{x_2}\frac{x^{n-\frac{3}{2}} \,}{\sqrt{x^{n-1}-x^n-a}} dx}{K(a)}\ > 3(1-\dfrac{1}{\pi})\biggl|S^{1}(\sqrt{\frac{1}{n}})\times S^{n-1}(\sqrt{\frac{n-1}{n}})\biggl|.
\end{equation}
This completes the proof of the corollary \ref{theorem 2}.
$$\eqno{\Box}$$

%
%

\section{Estimate of an upper bound}

\begin{theorem}\label{theorem 4}
The area of $M^n(3,2)$ satisfies
\begin{equation}
|M^n(3,2)|<3\biggl|S^{1}\biggl(\sqrt{\frac{1}{n}}\biggl)\times S^{n-1}\biggl(\sqrt{\frac{n-1}{n}}\biggl)\biggl|.
\end{equation}
\end{theorem}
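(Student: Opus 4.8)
The plan is to re-run the comparison-function machinery of Theorem~\ref{theorem 1}, but now to produce an \emph{upper} bound. The $3$-fold symmetry and rotation number $2$ of $M^n(3,2)$ force the period $K(a)=\frac{2\cdot 2\pi}{3}=\frac{4\pi}{3}$ in \eqref{k}; since $\pi<\frac{4\pi}{3}<\sqrt2\,\pi$, Lemma~\ref{lemma 1} provides a unique $a^\ast\in(0,a_0)$ with $K(a^\ast)=\frac{4\pi}{3}$. By Lemma~\ref{lemma 2} the area is $|M^n(3,2)|=2\,w(a^\ast)$, and inserting $K(a^\ast)=\frac{4\pi}{3}$ makes the prefactor collapse,
\[
|M^n(3,2)|=2\cdot 2\pi\sigma_{n-1}\frac{I(a^\ast)}{K(a^\ast)}=3\,\sigma_{n-1}\,I(a^\ast),\qquad I(a)=\int_{x_1}^{x_2}\frac{x^{\,n-\frac32}}{\sqrt{x^{n-1}-x^{n}-a}}\,dx.
\]
Since Lemma~\ref{lemma 2} gives $|S^1(\sqrt{1/n})\times S^{n-1}(\sqrt{(n-1)/n})|=2\pi\sigma_{n-1}\sqrt{a_0}$, the desired inequality is \emph{equivalent} to the scalar bound $I(a^\ast)<2\pi\sqrt{a_0}$, which under the substitution $y=x^{\,n-\frac12}$ of Theorem~\ref{theorem 1} becomes
\[
\int_{y_1}^{y_2}\frac{dy}{\sqrt{f(y)}}<(2n-1)\pi\sqrt{a_0}=2\sqrt2\,A_0\pi,\qquad f(y)=y^{\frac{2n-2}{2n-1}}-y^{\frac{2n}{2n-1}}-a.
\]
(More generally, an $s$-fold symmetric hypersurface has area $s\,\sigma_{n-1}I(a)$, so $I(a)<2\pi\sqrt{a_0}$ would give $|M^n|<s\,|S^1\times S^{n-1}|$ for all of them; the case $s=3$ is the present claim.)

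Because $a^\ast$ is not available in closed form, I would establish the bound uniformly for every $a\in(0,a_0)$. The idea reverses Theorem~\ref{theorem 1}: there one builds $g_1\ge f$; here I want $g_2$ with $0\le g_2\le f$ on $[y_1,y_2]$, vanishing at the endpoints, so that $\int_{y_1}^{y_2}dy/\sqrt f\le\int_{y_1}^{y_2}dy/\sqrt{g_2}$ with the right-hand side explicitly integrable. The key structural input, already visible in the computation of $h_1''$ in Theorem~\ref{theorem 1}, is that $f$ is strictly concave,
\[
f''(y)=-\frac{2n-2}{(2n-1)^2}y^{-\frac{2n}{2n-1}}-\frac{2n}{(2n-1)^2}y^{-\frac{2n-2}{2n-1}}<0,
\]
so every secant of $f$ lies below its graph. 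Choosing $g_2$ to be the two secant segments joining $(y_1,0)$ and $(y_2,0)$ to the apex $(y_c,f(y_c))$, where $y_c=(\frac{n-1}{n})^{(2n-1)/2}$ and $f(y_c)=a_0-a$, yields $g_2\le f$ and the elementary value
\[
\int_{y_1}^{y_2}\frac{dy}{\sqrt{g_2(y)}}=\frac{2(y_c-y_1)}{\sqrt{a_0-a}}+\frac{2(y_2-y_c)}{\sqrt{a_0-a}}=\frac{2(y_2-y_1)}{\sqrt{a_0-a}}.
\]
It then remains to verify the explicit inequality $2(y_2-y_1)<(2n-1)\pi\sqrt{a_0(a_0-a)}$ for all $a\in(0,a_0)$ and $n\ge2$, where $y_i=x_i^{\,n-1/2}$ and $x_1<\frac{n-1}{n}<x_2$ are the roots of $x^{n-1}-x^{n}=a$.

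The main obstacle is this final verification, because the constant is tight. As $a\to a_0$ the two roots merge, $f$ becomes parabolic near $y_c$, the true integral $\int dy/\sqrt f\to 2A_0\pi$, and the secant value tends to $\frac{4}{\pi}\cdot 2A_0\pi=8A_0$; the target $2\sqrt2\,A_0\pi$ is cleared precisely because $\frac{8}{2\sqrt2}=2\sqrt2<\pi$, a margin of only about ten percent. I would therefore control $y_2-y_1$ with care: near $a_0$ through the apex expansion $f(y)\approx(a_0-a)-\frac12|f''(y_c)|(y-y_c)^2$, and near $a=0$ through the endpoint behaviour $y_1\to0$, $y_2\to1$, checking that no interior value of $a$ makes the secant estimate overshoot. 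Should the plain secant prove too coarse at some intermediate $a$, I would replace $g_2$ by a concave quadratic through the same three points — still below $f$ by concavity, but with an $\arcsin$-type integral closer to $\int dy/\sqrt f$ — thereby recovering the needed margin.
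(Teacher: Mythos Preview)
Your reduction to the scalar inequality $\int_{y_1}^{y_2}dy/\sqrt{f(y)}<2\sqrt2\,A_0\pi$ is exactly the paper's starting point, and the idea of manufacturing $g_2$ with $0\le g_2\le f$ so that $\int dy/\sqrt f\le\int dy/\sqrt{g_2}$ is also the paper's plan. The concavity observation $f''<0$ is correct and does guarantee that your secant tent sits below $f$.

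The gap is the residual inequality. The secant bound $\frac{2(y_2-y_1)}{\sqrt{a_0-a}}$ still depends on $a$, and you leave the verification of $2(y_2-y_1)<(2n-1)\pi\sqrt{a_0(a_0-a)}$ as a plan rather than a proof. As you yourself note, the margin is only about ten percent at either endpoint, so ``checking that no interior value of $a$ makes the secant estimate overshoot'' is a genuine task, not a formality; and your fallback of a single interpolating parabola through $(y_1,0)$, $(y_c,a_0-a)$, $(y_2,0)$ is not automatically below $f$ (concavity of $f$ controls chords, not parabolic interpolants), so it would need its own argument.

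The paper avoids this entirely by a sharper choice of $g_2$: two downward parabolas centred at $y_c$, with curvatures $2C=|f''(y_c)|$ on $[y_1,y_c]$ and $2B=|f''(1)|=\tfrac{2}{2n-1}$ on $[y_c,y_2]$. Because $f''$ is monotone, pinning the curvature at one end forces $h_2''=g_2''-f''$ to have a sign on the whole subinterval, and together with $h_2'(y_c)=0$ and $h_2(y_1)=h_2(y_2)=0$ this yields $g_2\le f$. The payoff is that each $\arcsin$ integral $\int dy/\sqrt{g_2}=\tfrac{\pi}{2\sqrt B}$, $\tfrac{\pi}{2\sqrt C}$ is \emph{independent of $a$}; their sum is $\bigl(1+\sqrt{\tfrac{2(n-1)}{2n-1}(\tfrac{n}{n-1})^{n}}\bigr)A_0\pi\le\tfrac{25}{9}A_0\pi<2\sqrt2\,A_0\pi$, and the proof closes in one line. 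The moral: matching the curvature of the comparison function, rather than interpolating values, is what eliminates the $a$-dependence and the delicate endpoint analysis you were anticipating.
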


\begin{proof}
\noindent
Since the rotation number $p$ of $M^n(3,2)$ is $2$, we know from the lemma \ref{lemma 2} that the area of $M^n(3,2)$ is
\begin{equation}\label{eq:4-26-4}
4\pi \sigma_{n-1}\dfrac{\int_{x_1}^{x_2}\frac{x^{n-\frac{3}{2}} \,}{\sqrt{x^{n-1}-x^n-a}} dx}{K(a)}=
4\pi \sigma_{n-1}\dfrac{\int_{x_1}^{x_2}\frac{x^{n-\frac{3}{2}} \,}{\sqrt{x^{n-1}-x^n-a}} dx}{\frac{4\pi}{3}}
\end{equation}
for some $a<a_0$ and the area of $S^{1}\biggl(\sqrt{\frac{1}{n}}\biggl)\times S^{n-1}\biggl(\sqrt{\frac{n-1}{n}}\biggl)$ is $2\pi \sigma_{n-1}\sqrt{a_0}$. Hence it is sufficient to prove the following inequality
\begin{equation}
\int_{x_1}^{x_2}\dfrac{x^{n-\frac{3}{2}} \,}{\sqrt{x^{n-1}-x^n-a}} dx<2\pi\sqrt{a_0},
\end{equation}
that is,
\begin{equation}
\int_{y_1}^{y_2}\dfrac{1}{\sqrt{f(y)}}dy<(2n-1)\pi\sqrt{a_0}=\dfrac{4\pi}{\sqrt{2}}A_0.
\end{equation}

First of all, we construct a function $g_2(y)$ as follows:
\begin{equation}
g_2(y) = \left\{ {\begin{array}{*{20}{c}}
{C{{({y_1} - {y_c})}^2} - C{{(y - {y_c})}^2},\quad y \in [{y_1},{y_c}]},\\
{B{{({y_2} - {y_c})}^2} - B{{(y - {y_c})}^2},\quad y \in ({y_c},1]},
\end{array}} \right.\end{equation}
where $B=\frac{1}{2n-1}$, $C=\frac{{2(n - 1)}}{{{{(2n - 1)}^2}}}{\left( {\frac{{n - 1}}{n}} \right)^{ - n}}$ and $y_c=(\frac{n-1}{n})^{\frac{1}{2}(2n-1)}$, then we claim
\begin{equation}
g_2(y) \le f(y), \ \ \ y\in [{y_1},{y_2}].
\end{equation}

 Let $h_2(y)=g_2(y)-f(y)$. We have to consider two cases.

\noindent {\bf Case 1: $y\in [y_c,y_2]$.}

  By a direct calculation, we obtain
\begin{equation}\label{eq:4-22-1}
h_2'(y)=g_2'(y)-f'(y) = - 2B(y - {y_c}) - \frac{{2n - 2}}{{2n - 1}}{y^{\frac{{ - 1}}{{2n - 1}}}} + \frac{{2n}}{{2n - 1}}{y^{\frac{1}{{2n - 1}}}},
\end{equation}
\begin{equation}\label{eq:4-22-2}
h_2''(y) =  - 2B + \frac{{2n - 2}}{{2n - 1}}\frac{1}{{2n - 1}}{y^{ - \frac{{2n}}{{2n - 1}}}} + \frac{{2n}}{{2n - 1}}\frac{1}{{2n - 1}}{y^{ - \frac{{2n - 2}}{{2n - 1}}}}
\end{equation}
and
\begin{equation}\label{eq:4-22-3}
h_2''(1)=0,
\end{equation}
we can see from \eqref{eq:4-22-2} that $h_2^{\prime\prime}(y)$ is a monotonic decreasing function on an interval $[y_c,1]$, then
\begin{equation}
h_2^{\prime\prime}(y) \geq h_2^{\prime\prime}(y_2)\geq h_2^{\prime\prime}(1)=0, \ \ \ {\text{for}} \ y\in [y_c,y_2],
\end{equation}
that is, $h_2^{\prime}(y)$ is a monotonic increasing function on an interval $[y_c,y_2]$. Since $h_2^{\prime}(y_c)=0$, we have
$h_2^{\prime}(y)\geq0$ for $ y\in [y_c,y_2]$, that is, $h_2(y)$ is a monotonic increasing function on an interval $[y_c,y_2]$. Since $h_2(y_2)=0$, we conclude that
\begin{equation}
h_2(y)\leq h_2(y_2)=0,\ \ \ {\mbox{for}} \ y\in [y_c,y_2],
\end{equation}
it follows that
\begin{equation}
\frac{1}{{\sqrt {f(y)} }} - \frac{1}{{\sqrt {g_2(y)} }} \leq 0,
\end{equation}
\begin{equation}
\int_{{y_c}}^{{y_2}} {\left( {\frac{1}{{\sqrt {f(y)} }} - \frac{1}{{\sqrt {g_2(y)} }}} \right)} dy \leq 0.
\end{equation}

On the other hand,
\begin{equation}
\int_{{y_c}}^{{y_2}} {\frac{1}{{\sqrt {g_2(y)} }}dy}  =\frac{{\sqrt {2n - 1} }}{2} \pi,
\end{equation}
we have
\begin{equation}\label{eq:4-22-7}
\int_{{y_c}}^{{y_2}} {\frac{1}{{\sqrt {{y^{\frac{{2n - 2}}{{2n - 1}}}} - {y^{\frac{{2n}}{{2n - 1}}}} - a} }}dy}
=\int_{{y_c}}^{{y_2}} {{\frac{1}{{\sqrt {f(y)} }}}}dy\leq\int_{{y_c}}^{{y_2}} \frac{1}{{\sqrt {g_2(y)} }} dy=\frac{{\sqrt {2n - 1} }}{2}\pi.
\end{equation}\\

\noindent {\bf Case 2: $y\in [y_1,y_c]$.}

 By a direct calculation, we have
\begin{equation}\label{eq:4-22-4}
h_2'(y) =  - 2C(y - {y_c}) - \frac{{2n - 2}}{{2n - 1}}{y^{\frac{{ - 1}}{{2n - 1}}}} + \frac{{2n}}{{2n - 1}}{y^{\frac{1}{{2n - 1}}}},
\end{equation}
\begin{equation}\label{eq:4-22-5}
h_2''(y) =  - 2C + \frac{{2n - 2}}{{2n - 1}}\frac{1}{{2n - 1}}{y^{ - \frac{{2n}}{{2n - 1}}}} + \frac{{2n}}{{2n - 1}}\frac{1}{{2n - 1}}{y^{ - \frac{{2n - 2}}{{2n - 1}}}}
\end{equation}
and
\begin{equation}\label{eq:4-22-6}
h_2''(y_c)=0,
\end{equation}
then we obtain from \eqref{eq:4-22-5} that $h_2^{\prime\prime}(y)$ is a monotonic decreasing function on an interval $[y_1,y_c]$, then
\begin{equation}
h_2^{\prime\prime}(y) \geq h_2^{\prime\prime}(y_c)=0, \ \ \ {\text{for}} \ y\in [y_1,y_c],
\end{equation}
that is, $h_2^{\prime}(y)$ is a monotonic increasing function on an interval $[y_1,y_c]$. Since $h_2^{\prime}(y_c)=0$, we have
$h_2^{\prime}(y)\leq0$ for $y\in [y_1,y_c]$, that is, $h_2(y)$ is a monotonic decreasing function on an interval $[y_1,y_c]$. Since $h_2(y_1)=0$, we conclude that
\begin{equation}
h_2(y)\leq h_2(y_1)=0,\ \ \ {\mbox{for}} \ y\in [y_1,y_c],
\end{equation}
it follows that
\begin{equation}
\frac{1}{{\sqrt {f(y)} }} - \frac{1}{{\sqrt {g_2(y)} }} \leq 0.
\end{equation}

 On the other hand,
\begin{equation}
\int_{{y_1}}^{{y_c}} {\frac{1}{{\sqrt {g_2(y)} }}dy} =\frac{1}{2}\sqrt {\frac{{{{(2n - 1)}^2}}}{{2(n - 1)}}{{\left( {\frac{n}{{n - 1}}} \right)}^{ - n}}}\pi,
\end{equation}
we have

\begin{equation}\label{eq:4-22-8}
\aligned
&\ \ \ \int_{{y_1}}^{{y_c}} {\frac{1}{{\sqrt {{y^{\frac{{2n - 2}}{{2n - 1}}}} - {y^{\frac{{2n}}{{2n - 1}}}} - a} }}dy}\\
&=\int_{{y_1}}^{{y_c}} {{\frac{1}{{\sqrt {f(y)} }}}}dy\\
 &\leq\int_{{y_1}}^{{y_c}} \frac{1}{{\sqrt {g_2(y)} }} dy\\
&= \frac{1}{2}\sqrt {\frac{{{{(2n - 1)}^2}}}{{2(n - 1)}}{{\left( {\frac{n}{{n - 1}}} \right)}^{ - n}}}\pi.
\endaligned
\end{equation}

\noindent From \eqref{eq:4-22-7} and \eqref{eq:4-22-8}, we have
\begin{equation}\label{eq:4-22-9}
\aligned
 \int_{{y_1}}^{{y_2}} {\frac{1}{{\sqrt {f(y)} }}} dy
&\le \frac{{\sqrt {2n - 1} }}{2}\pi  + \frac{1}{2}\sqrt {\frac{{{{(2n - 1)}^2}}}{{2(n - 1)}}{{\left( {\frac{n}{{n - 1}}} \right)}^{ - n}}} \pi\\
 & = \left( {1 + \sqrt {\frac{{2(n - 1)}}{{2n - 1}}{{\left( {\frac{n}{{n - 1}}} \right)}^n}} } \right){A_0}\pi\\
 &= \left( 1 +\sqrt{\frac{2}{(2n-1)a_0}}\right){A_0}\pi\\
 &<\left( 1 +\sqrt{\biggl(\frac{n}{n-1}\biggl)^n}\right){A_0}\pi.
\endaligned
\end{equation}

When $n=3$, we see from \eqref{eq:4-22-9}
\begin{equation}\label{eq:4-26-1}
\int_{{y_1}}^{{y_2}} {\frac{1}{{\sqrt {f(y)} }}} dy\leq\left( {1 + \sqrt {\frac{{2(n - 1)}}{{2n - 1}}{{\left( {\frac{n}{{n - 1}}} \right)}^n}} } \right){A_0}\pi
 =\left(1+\sqrt{\frac{27}{10}}\right){A_0}\pi,
 \end{equation}
\noindent when $n\geq4$, we get
\begin{equation}\label{eq:4-26-2}
 \int_{{y_1}}^{{y_2}} {\frac{1}{{\sqrt {f(y)} }}} dy<\left( 1 +\sqrt{\biggl(\frac{n}{n-1}\biggl)^n}\right){A_0}\pi\leq (1+\dfrac{16}{9}){A_0}\pi=\frac{25}{9}{A_0}\pi.
 \end{equation}

Hence, we obtain from \eqref{eq:4-26-1} and \eqref{eq:4-26-2} that
\begin{equation}\label{eq:4-26-3}
\int_{{y_1}}^{{y_2}} {\frac{1}{{\sqrt {f(y)} }}} dy<\frac{25}{9} A_0\pi<\frac{4}{\sqrt{2}}A_0\pi.
\end{equation}
\noindent This completes the proof of the theorem \ref{theorem 4}.
\end{proof}

\noindent{\it Proof of the theorem \ref{theorem 3}}.
From the lemma \ref{lemma 1} and the lemma \ref{lemma 2}, we know that the area  of a compact minimal rotational hypersurface $M^n$ in $S^{n+1}(1)$ with non-constant principal curvatures except $M^n(3,2)$ and $M^n(5,3)$  is greater than
\begin{equation}
4\times 2\pi\sigma_{n-1}\dfrac{\inf\limits_{a\in (0,a_0)}\int_{x_1}^{x_2}\frac{x^{n-\frac{3}{2}} \,}{\sqrt{x^{n-1}-x^n-a}} dx}{K(a)}\,
\end{equation}
since the rotation number of $M^n(3,2)$ is $2$, the rotation number of $M^n(5,3)$ is $3$, the rotation number of $M^n(7,4)$ is $4$ and the rotation numbers of other hypersurfaces are greater than $4$.

We know from the lemma \ref{lemma 2} and the proof of the theorem \ref{theorem 1} that

\begin{equation}
\aligned
|M^n(7,4)|&=4\times 2\pi\sigma_{n-1}\dfrac{\int_{x_1}^{x_2}\frac{x^{n-\frac{3}{2}} \,}{\sqrt{x^{n-1}-x^n-a}} dx}{\frac{8}{7}\pi}\ \\
&>4(1-\frac{1}{\pi})\times\frac{\sqrt{2}}{\frac{8}{7}}\biggl|S^{1}(\sqrt{\frac{1}{n}})\times S^{n-1}(\sqrt{\frac{n-1}{n}})\biggl|\\
&>3|S^{1}(\sqrt{\frac{1}{n}})\times S^{n-1}(\sqrt{\frac{n-1}{n}})\biggl|\\
&>|M^n(3,2)|
\endaligned
\end{equation}
for some $a\in (0,a_0)$ and the area of other hypersurface $M^n$ except $M^n(3,2)$, $M^n(5,3)$ and $M^n(7,4)$ satisfies
\begin{equation}
\aligned
|M^n|&>5(1-\frac{1}{\pi})\biggl|S^{1}(\sqrt{\frac{1}{n}})\times S^{n-1}(\sqrt{\frac{n-1}{n}})\biggl|\\
&>3|S^{1}(\sqrt{\frac{1}{n}})\times S^{n-1}(\sqrt{\frac{n-1}{n}})\biggl|\\
&>|M^n(3,2)|.
\endaligned
\end{equation}
Hence, the lowest value of area among all compact minimal rotational hypersurfaces with non-constant principal curvatures in the unit sphere $S^{n+1}(1)$ is the area of either $M^n(3,2)$ or $M^n(5,3)$.
\noindent This completes the proof of the theorem \ref{theorem 3}.
$$\eqno{\Box}$$

 According to the above theorems, we propose the following conjecture.

\noindent {\bf Conjecture}: {\it The lowest value of area among all compact minimal rotational hypersurfaces with non-constant principal curvatures in the unit sphere $S^{n+1}(1)$ is the area of $M^n$ with $3$-fold rotational symmetry and rotation number $2$.}

\section{Entropies of some special self-shrinkers}

In this section, we estimate the entropies of some special self-shrinkers as the application of the estimate of the areas.

 An immersed hypersurface $X:M^n\rightarrow \mathbb{R}^{n+1}$ in the ($n+1$)-dimensional Euclidean space $\mathbb{R}^{n+1}$ is called a {\it self-shrinker} if it satisfies
\begin{equation}\label{eq:6-3-1}
H+\langle X,N\rangle=0,
\end{equation}
where $N$ is the unit normal vector of $X:M^n\rightarrow \mathbb{R}^{n+1}$, $H$ is the mean curvature.

From the definition of self-shrinkers, we know that if $X:M^n\rightarrow S^{n+1}(1)$ is a minimal rotational hypersurface, then $C(M^n)$, the {\it cone} over $M^n$, satisfies the self-shrinker equation \eqref{eq:6-3-1} in $\mathbb{R}^{n+2}$.

On the other hand, the {\it entropy} $\lambda(M^n)$ of self-shrinker $M^n$ can be defined as follows:
\begin{equation}
\lambda(M^n)=\dfrac{1}{(2\pi)^{n/2}}\int_{M^n}e^{-|X|^2/2}d\mu.
\end{equation}

Therefore, the entropy $\lambda(C(M^n))$ of the cone $C(M^n)$ over a compact minimal rotational hypersurface $M^n\subset S^{n+1}(1)$ in $\mathbb{R}^{n+2}$ is

\begin{equation}\label{eq:6-3-2}
\aligned
 \lambda(C(M^n))&=\frac{1}{(2\pi)^{(n+1)/2}}|M^n|\int_0^{+\infty}t^ne^{-t^2/2}dt\\
 &=\dfrac{1}{2}\pi^{-(n+1)/2}\Gamma(\frac{n+1}{2})|M^n|\\
&=\dfrac{1}{\sigma_n}|M^n|,
\endaligned
\end{equation}
where $|M^n|$ denotes the area of $M^n$, $\Gamma(x)=\int_0^{+\infty}t^{x-1}e^{-t}dt$ is the Gamma function, $\sigma_n$ denotes the $n$-area of $S^n(1)$.

By a computation, we have

\begin{theorem}\label{theorem 5}
If $M^n$ is a compact minimal rotational hypersurface in $S^{n+1}(1)$, then the entropies $\lambda(C(M^n))$ of the cones $C(M^n)$ over $M^n$ in $\mathbb{R}^{n+2}$ satisfies either
$\lambda(C(M^n))=1$, or $\lambda(C(M^n))=\dfrac{2\pi\sigma_{n-1}\sqrt{a_0}}{\sigma_n}$,  or
$\lambda(C(M^n))> \dfrac{4(\pi-1)\sigma_{n-1}\sqrt{a_0}}{\sigma_n}$, where $a_0=\frac{(n-1)^{n-1}}{n^n}$, $\sigma_n$ denotes the $n$-area of $S^n(1)$.
\end{theorem}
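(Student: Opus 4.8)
The plan is to recognize that Theorem \ref{theorem 5} is an immediate translation of the area trichotomy of Theorem \ref{theorem 1} through the entropy--area identity \eqref{eq:6-3-2}. That identity asserts $\lambda(C(M^n)) = \frac{1}{\sigma_n}|M^n|$, so the entropy is nothing but a fixed positive multiple of the area. Hence every statement about $|M^n|$ converts verbatim into a statement about $\lambda(C(M^n))$ after dividing by $\sigma_n$, and the three prescribed values/bounds on the entropy will correspond to the three alternatives for the area.

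First I would record from Lemma \ref{lemma 2} the closed form $|S^1(\sqrt{1/n}) \times S^{n-1}(\sqrt{(n-1)/n})| = 2\pi\sigma_{n-1}\sqrt{a_0}$, where $a_0 = \frac{(n-1)^{n-1}}{n^n}$, and I would note (as in the remark preceding \eqref{eq:6-3-2}) that the cone $C(M^n)$ over a minimal hypersurface of $S^{n+1}(1)$ is indeed a self-shrinker, so that the identity \eqref{eq:6-3-2} applies. Then I would run through the three alternatives of Theorem \ref{theorem 1}. If $|M^n| = |S^n(1)| = \sigma_n$, then $\lambda(C(M^n)) = \sigma_n/\sigma_n = 1$. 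If $|M^n| = 2\pi\sigma_{n-1}\sqrt{a_0}$, then dividing by $\sigma_n$ gives $\lambda(C(M^n)) = \frac{2\pi\sigma_{n-1}\sqrt{a_0}}{\sigma_n}$. Finally, if $|M^n| > 2(1-\frac{1}{\pi})\cdot 2\pi\sigma_{n-1}\sqrt{a_0}$, then dividing by $\sigma_n$ and simplifying the numerical factor $2(1-\frac{1}{\pi})\cdot 2\pi = 4\pi - 4 = 4(\pi-1)$ yields $\lambda(C(M^n)) > \frac{4(\pi-1)\sigma_{n-1}\sqrt{a_0}}{\sigma_n}$. These three conclusions are precisely the asserted trichotomy.

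There is essentially no analytic obstacle here, since all of the delicate estimation of the integral $\int_{x_1}^{x_2} x^{n-3/2}/\sqrt{x^{n-1}-x^n-a}\,dx$ was already carried out in the proof of Theorem \ref{theorem 1}. The only point requiring a little care is the bookkeeping: one must correctly combine the factor $1/\sigma_n$ coming from \eqref{eq:6-3-2} with the Clifford-area constant $2\pi\sigma_{n-1}\sqrt{a_0}$ from Lemma \ref{lemma 2}, and perform the elementary simplification of $2(1-\frac{1}{\pi})\cdot 2\pi$ into $4(\pi-1)$. Thus the whole of Theorem \ref{theorem 5} reduces to substituting the area trichotomy into the entropy formula.
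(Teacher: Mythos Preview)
Your proposal is correct and follows exactly the paper's own approach: the paper's proof simply says to combine Theorem \ref{theorem 1} with the entropy--area identity \eqref{eq:6-3-2} and the Clifford area formula $|S^1(\sqrt{1/n})\times S^{n-1}(\sqrt{(n-1)/n})|=2\pi\sigma_{n-1}\sqrt{a_0}$, which is precisely what you do (with the bookkeeping spelled out in more detail).
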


\begin{proof}
Combining the theorem \ref{theorem 1}, \eqref{eq:6-3-2} and using
\begin{equation}
\biggl|S^{1}(\sqrt{\frac{1}{n}})\times S^{n-1}(\sqrt{\frac{n-1}{n}})\biggl|=2\pi\sigma_{n-1}\sqrt{a_0},
\end{equation}
we can proof the theorem \ref{theorem 5}.
\end{proof}

\end{document}